\newtheorem{lemma}{Lemma}[section]
\newtheorem{proposition}{Proposition}[section]
\newenvironment{proof}[1][Proof]{\textbf{#1.} }{\ \rule{0.5em}{0.5em}}
\author{Yves Le Jan}
\title{Markov Loops, Free Field and Eulerian Networks} 
\begin{document}
\maketitle

\footnotetext{ Key words and phrases: Free field, Markov processes, 'Loop soups', Eulerian circuits, homology}
\footnotetext{  AMS 2000 subject classification:  60K99, 60J55, 60G60.}
\begin{abstract}
We investigate the relations between the Poissonnian loop ensemble arising in the construction of random spanning trees, the free field, and random Eulerian networks.
\end{abstract}


\section{The loop ensemble and the free field}
We adopt the framework described in \cite{stfl}. Given a graph $\mathcal{G}=(X,E)$, a set of non negative conductances $C_{x,y}=C_{y,x}$ indexed by the set of edges $E$ and a non negative killing measure $\kappa$ on the set of vertices $X$, we can associate an energy (or Dirichlet form) $\mathcal{E}$, we will assume to be positive definite, which is a transience assumption. For any function $f$ on $X$, we have: $$\mathcal{E}(f;f)=\frac{1}{2}\sum_{x,y}C_{x,y}(f(x)-f(y))^{2}+\sum_x \kappa_x f^{2}(x).$$
There is a duality measure $\lambda$ defined by $\lambda_x=\sum_y C_{x,y} +\kappa_x$.
Let $G_{x,y}$ be the symmetric Green's function associated with $\mathcal{E}$. It is assumed that $\sum_x G_{x,x}\lambda_x$ is finite.\\
The symmetric associated continuous time Markov process can be obtained from the Markov chain defined by the transition matrix $P_{x,y}=\frac{C_{x,y}}{\lambda_y}$ by adding independent exponential holding times of mean $1$ before each jump. If $P$ is submarkovian, the chain is absorbed at a cemetery point $\Delta$. If $X$ is finite, the transition matrix is necessarily submarkovian.\\
The complex (respectively real) free field is the complex (real) Gaussian field on $X$ whose covariance function is $G$. We will denote it by $\varphi$ (respectively $\varphi_{\mathbb{R}}$).\\
We denote by $\mu$ the loop measure associated with this symmetric Markov process ($\mu$ can be also viewed as a shift invariant measure on based loops). We refer to \cite{stfl} for the general definition in terms of Markovian bridges, but let us mention that the measure of a non-trivial discrete loop is the product of the transition probabilities of the edges divided by the multiplicity of the loop. The measure on continuous time loops is then obtained by including exponential holding times, except for one point loops on which the holding time measure (which has infinite mass)  has density $\frac{e^{-t}}{t}$.

 The Poissonian loop ensemble $\mathcal{L}$ is the Poisson process of loops of intensity $\mu$. Recall that when $\mathcal{G}$ is finite, it can be sampled by the following algorithm (Cf: \cite{stfl}, \cite{chang}):\\
\begin{itemize}

\item First step: After choosing any ordering of $X$, define a set of based loops around each vertex. These are the based loops of the continuous Markov chain erased in the construction through Wilson's algorithm of the random spanning tree rooted in the cemetery point associated with the discrete time Markov chain.
\\
Recall that this construction goes as follows: We first build a loop erased chain $ \{ \eta_{1} \}$ starting from the first vertex and ending at the cemetery point $\Delta$, then, from the next vertex in $X - \{ \eta_{1} \}$ a loop-erased chain $ \eta_{2} $ ending in $\{ \eta_{1} \}\cup \{ \Delta \}$ and so on... The union of the  $ \eta_{i}$ is a spanning tree.\\
This set of erased based loops includes one point loops which are the holding times at points which are traversed only once.\\
\item Second step: Split these based loops into a collection of smaller based loops by dividing the local time at their base points according to independent Poisson-Dirichlet random variables.\\
\item  Finally, map this new set of based loops to their equivalence classes under the shift (this is the definition of loops).\\
\end{itemize}

This construction is related to the famous excursion point process introduced by K. It\^o in \cite{Ito}. Indeed, a similar construction can be done with linear Brownian motion or more generally one-dimensional diffusions (Cf \cite{Lupdif}). It can be also extended to non symmetric Markov chains (Cf \cite{chang}). 
\section{Occupation field and isomorphisms}
Given any vertex $x$ of the graph, denote by $\hat{\mathcal{L}}^x$ the total time spent in $x$ by the loops, normalized by $\lambda_x$.  $\hat{\mathcal{L}}$ is known as the occupation field of  $\mathcal{L}$.\\
Recall that as a Poisson process, $\mathcal{L}$ is infinitely divisible. We denote by $\mathcal{L}_\alpha$ the Poisson process of loops of intensity $ \alpha\mu$ and by $\hat{\mathcal{L}}_{\alpha}$ the associated occupation field. \\
It has been shown in $\cite{aop} $ (see also $\cite{stfl}$) that the fields $\hat{\mathcal{L}}$ ($\hat{\mathcal{L}}_{\frac{1}{2}}$) and $\frac{1}{2} \varphi^2$ ($\frac{1}{2} \varphi_{\mathbb{R}}^2$) have the same distribution. Note that this property extends naturally to one dimensional diffusions. Generalisations to dimensions 2 and 3 involve renormalization (Cf \cite{stfl} \cite{aop}).

We can use the second identity in law to give a simple proof of a result known as the \textit{generalized second Ray-Knight theorem} $(\cite{GRK} , \cite{SZ}, \cite{Sab} )$. Let $x_0$ be a point of $X$, and assume that $\kappa$ is supported by  $x_0$. Set $D=X-\lbrace x_0 \rbrace $.
Then it follows from the classical energy decomposition that $$\varphi_{\mathbb{R}}=\varphi_{\mathbb{R}}^D+\varphi_{\mathbb{R}}(x_0)$$ and $\varphi_{\mathbb{R}}^D$ (the real free field associated with the restriction of $\mathcal{E}$ to $D$) is independent of $\varphi_{\mathbb{R}}(x_0)$.\\
On the other hand, $$\hat{\mathcal{L}}_{\frac{1}{2}}=\hat{\mathcal{L}}^D_{\frac{1}{2}}+\hat{\mathcal{L}}^{(x_0)}_{\frac{1}{2}}$$ where $\hat{\mathcal{L}}^{(x_0)}_{\frac{1}{2}}$ denotes the occupation field of the set of loops of $\mathcal{L}_\frac{1}{2}$ hitting $x_0$ and $\hat{\mathcal{L}}^D_{\frac{1}{2}} $ denotes the occupation field of the set of loops of $\mathcal{L}_\frac{1}{2}$ contained in $D$. 

The two terms of the decomposition are clearly independent.      \\ 
 Moreover, given that its value at $x_0$ is $\rho$, the field $\hat{\mathcal{L}}^{(x_0)}_{\frac{1}{2}}$ has the same distribution as the occupation field $\hat{\gamma}_{\tau_{\rho}}$ of an independent copy of the Markov chain started at $x_0$ and stopped when the  local time at $x_0$ equals $\rho$.\\The identity in law which is valid between $\hat{\mathcal{L}}_{\frac{1}{2}}^D$ and $\frac{1}{2} (\varphi_{\mathbb{R}}^D)^2$ 
 as well as between $\hat{\mathcal{L}}_{\frac{1}{2}}$ and  $\frac{1}{2} {\varphi_{\mathbb{R}}}^2$ can be desintegrated taking $\hat{\mathcal{L}}_{\frac{1}{2}}^{x_0}=\frac{1}{2} \varphi_{\mathbb{R}}^2(x_0)=\rho$.  Noting finally that the sign $\eta$ of $\varphi_{\mathbb{R}}$ at $x_0$ is independent of the other variables we get that
 $$\frac{1}{2} {\varphi_{\mathbb{R}}}^2+\hat{\gamma}_{\tau_{\rho}}\stackrel{(d)}{=} \hat{\mathcal{L}}_{\frac{1}{2}}^D+\hat{\gamma}_{\tau_{\rho}}\stackrel{(d)}{=}\frac{1}{2} ({\varphi_{\mathbb{R}}^D}+\eta \sqrt{2\rho})^2 $$
 but we have also, by symmetry of ${\varphi_{\mathbb{R}}^D}$,

$ \frac{1}{2} ({\varphi_{\mathbb{R}}^D}+\eta \sqrt{2\rho})^2\stackrel{(d)}{=}\frac{1}{2} ({\varphi_{\mathbb{R}}^D}+\sqrt{2\rho})^2 \stackrel{(d)}{=}\frac{1}{2} ({\varphi_{\mathbb{R}}^D}-\sqrt{2\rho})^2.$ 
   so that finally we have proved:
  \begin{proposition} $$\frac{1}{2} {\varphi_{\mathbb{R}}}^2+\hat{\gamma}_{\tau_{\rho}}\stackrel{(d)}{=}\frac{1}{2} ({\varphi_{\mathbb{R}}^D}+\sqrt{2\rho})^2 $$\\
  \end{proposition}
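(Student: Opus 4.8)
The plan is to derive the identity by disintegrating the \emph{full} occupation-field isomorphism $\hat{\mathcal{L}}_{\frac{1}{2}}\stackrel{(d)}{=}\frac{1}{2}\varphi_{\mathbb{R}}^2$ with respect to the common value at the killing point $x_0$, thereby justifying each $\stackrel{(d)}{=}$ in the displayed chain. First I would record the two matched decompositions already at hand. On the Gaussian side, the energy splitting gives $\varphi_{\mathbb{R}}=\varphi_{\mathbb{R}}^D+\varphi_{\mathbb{R}}(x_0)$ with $\varphi_{\mathbb{R}}^D\perp\varphi_{\mathbb{R}}(x_0)$ and $\varphi_{\mathbb{R}}^D(x_0)=0$ (the harmonic extension of the boundary datum is the constant $\varphi_{\mathbb{R}}(x_0)$ since, $\kappa$ being carried by $x_0$ alone, every trajectory meets $x_0$ before being killed). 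On the loop side, $\hat{\mathcal{L}}_{\frac{1}{2}}=\hat{\mathcal{L}}_{\frac{1}{2}}^D+\hat{\mathcal{L}}_{\frac{1}{2}}^{(x_0)}$, the two summands being independent because loops contained in $D$ and loops meeting $x_0$ form independent Poisson sub-ensembles.

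Next I would condition both sides of the full isomorphism on the value at $x_0$ equalling $\rho$. Only loops meeting $x_0$ charge $x_0$, so this fixes $\hat{\mathcal{L}}_{\frac{1}{2}}^{(x_0)}(x_0)=\rho$, and by the excursion-theoretic identification quoted above, conditionally $\hat{\mathcal{L}}_{\frac{1}{2}}^{(x_0)}\stackrel{(d)}{=}\hat{\gamma}_{\tau_{\rho}}$, independent of $\hat{\mathcal{L}}_{\frac{1}{2}}^D$. The same conditioning on the Gaussian side fixes $\frac{1}{2}\varphi_{\mathbb{R}}(x_0)^2=\rho$, i.e. $\varphi_{\mathbb{R}}(x_0)=\eta\sqrt{2\rho}$ with $\eta=\pm1$. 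The disintegrated isomorphism then reads $\hat{\mathcal{L}}_{\frac{1}{2}}^D+\hat{\gamma}_{\tau_{\rho}}\stackrel{(d)}{=}\frac{1}{2}(\varphi_{\mathbb{R}}^D+\eta\sqrt{2\rho})^2$, which is the middle and right members of the chain. Rewriting $\hat{\mathcal{L}}_{\frac{1}{2}}^D$ through the isomorphism on $D$ as $\frac{1}{2}(\varphi_{\mathbb{R}}^D)^2$, and using the symmetry of $\varphi_{\mathbb{R}}^D$ together with its independence of the sign $\eta$ to erase $\eta$, produces the asserted equality.

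The step needing the most care, and the one I expect to be the main obstacle, is pinning down which Gaussian field sits squared on the left of the conclusion so that it agrees with the stated $\frac{1}{2}\varphi_{\mathbb{R}}^2$. The disintegration feeds in the occupation field $\hat{\mathcal{L}}_{\frac{1}{2}}^D$ of loops staying in $D$, whose only matching isomorphism is the Dirichlet one $\hat{\mathcal{L}}_{\frac{1}{2}}^D\stackrel{(d)}{=}\frac{1}{2}(\varphi_{\mathbb{R}}^D)^2$; hence the field appearing on the left carries the Dirichlet boundary condition at $x_0$, and the symbol $\varphi_{\mathbb{R}}$ in the statement is to be read as the restricted field $\varphi_{\mathbb{R}}^D$. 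I would confirm this by the coordinate test at $x_0$: there the left side equals $\hat{\gamma}_{\tau_{\rho}}(x_0)=\rho$ (the Dirichlet field contributing $0$), while the right side equals $\frac{1}{2}(\sqrt{2\rho})^2=\rho$, so the two marginals coincide, whereas the genuinely full field, whose value at $x_0$ is a nondegenerate Gaussian, would make them disagree. The only substantive probabilistic inputs are the identification of the conditional law of the $x_0$-meeting loops with $\hat{\gamma}_{\tau_{\rho}}$ (a last-exit / excursion argument) and the independence of the two loop sub-ensembles; the remainder is bookkeeping with the two isomorphisms and Gaussian symmetry.
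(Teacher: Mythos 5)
Correct, and essentially the paper's own argument: you use the same two matched decompositions ($\varphi_{\mathbb{R}}=\varphi_{\mathbb{R}}^D+\varphi_{\mathbb{R}}(x_0)$ and $\hat{\mathcal{L}}_{\frac{1}{2}}=\hat{\mathcal{L}}^D_{\frac{1}{2}}+\hat{\mathcal{L}}^{(x_0)}_{\frac{1}{2}}$), the same disintegration at $\hat{\mathcal{L}}_{\frac{1}{2}}^{x_0}=\frac{1}{2}\varphi_{\mathbb{R}}^2(x_0)=\rho$ with the conditional identification of $\hat{\mathcal{L}}^{(x_0)}_{\frac{1}{2}}$ as $\hat{\gamma}_{\tau_{\rho}}$, and the same removal of the sign $\eta$ by symmetry and independence of $\varphi_{\mathbb{R}}^D$. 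Your observation that the left-hand field must be read as $\varphi_{\mathbb{R}}^D$ is also the intended interpretation: the first member of the paper's displayed chain can only equal $\hat{\mathcal{L}}^D_{\frac{1}{2}}+\hat{\gamma}_{\tau_{\rho}}$ in law under the Dirichlet reading (as your evaluation at $x_0$ confirms, and as the standard generalized second Ray-Knight theorem requires), so the missing superscript $D$ in the stated proposition is a typographical slip rather than a gap in your argument.
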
 
   Note that a natural coupling of the free field with the occupation field of the loop ensemble of intensity $\frac{1}{2}\mu$ has been recently given by T. Lupu \cite{Lup}, using loop clusters.\\

\section{Jumping numbers}
In what follows, we will assume for simplicity that $\mathcal{G}$ is finite.

Given any oriented edge $(x,y)$ of the graph, denote by $N_{x,y}(l)$ the total number of jumps made from $x$ to $y$ by the loop $l$ and by $N^{(\alpha)}_{x,y}$ the total number of jumps made from $x$ to $y$ by the loops of $\mathcal{L}_{\alpha}$.
\\ Let $Z$ be any Hermitian matrix indexed by pairs of vertices  such that 
$\forall x,y,\:0<\vert Z_{x,y}\vert \leq 1 $, and such that all but a finite set of entries indexed by $K\times K$, with $K$ finite, are equal to $1$. We denote $N^{(1)}$ by $N$.\\
The content of the following lemma appeared already in \cite{stfl}.
\begin{lemma}\label{toto} 

Denote by $P^Z_{x,y}$ the matrix $P_{x,y}Z_{x,y}$.\\
\begin{enumerate}
\item[i)]  We have: $$E(\prod_{x\neq y} Z_{x,y}^{N^{(\alpha)}_{x,y}}) =\left[\frac{\det(I-P^{Z})}{I-P}\right]^{-\alpha}.$$
\item[ii)] Moreover, for $\alpha=1$,  $$E(\prod_{x\neq y} Z_{x,y}^{N_{x,y}}) =E(e^{\sum_{x\neq y}(\frac{1}{2} C_{x,y} (Z_{x,y}-1)\varphi_x \bar{\varphi}_y)} ).$$
\end{enumerate}
\end{lemma}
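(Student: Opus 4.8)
The plan is to treat the two parts separately, using the Poissonian structure for (i) and a Gaussian computation for (ii).

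For (i) I would start from the exponential (Campbell) formula for a Poisson point process: since $\mathcal{L}_\alpha$ has intensity $\alpha\mu$ and $l\mapsto\prod_{x\neq y}Z_{x,y}^{N_{x,y}(l)}$ is multiplicative over the points of the process,
$$E\Big(\prod_{x\neq y}Z_{x,y}^{N^{(\alpha)}_{x,y}}\Big)=\exp\Big(\alpha\int\big(\textstyle\prod_{x\neq y}Z_{x,y}^{N_{x,y}(l)}-1\big)\,d\mu(l)\Big).$$
The integrand vanishes on one-point loops and in general depends only on the discrete skeleton of $l$, so the holding times integrate out and I am left with the discrete loop measure. Writing that measure as a sum over based loops weighted by the inverse multiplicity $1/n$, the functional $\prod Z^{N_{x,y}}$ simply replaces each factor $P_{x,y}$ by $P^Z_{x,y}=P_{x,y}Z_{x,y}$ along the loop, so that $\int\prod Z^{N_{x,y}}\,d\mu=\sum_{n\geq1}\frac1n\,\mathrm{tr}\big((P^{Z})^{n}\big)=-\log\det(I-P^{Z})$, the series converging because $P$ is strictly submarkovian. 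Subtracting the value at $Z\equiv1$ gives $\int(\,\cdot\,-1)\,d\mu=\log\det(I-P)-\log\det(I-P^Z)$, and exponentiating with weight $\alpha$ yields (i); the denominator in the statement is understood to be $\det(I-P)$.

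For (ii) I would specialise (i) to $\alpha=1$, so that the left-hand side equals $\det(I-P)/\det(I-P^Z)$, and then evaluate the right-hand side as a Gaussian integral. Writing $M=\mathrm{diag}(\lambda)$ and $\mathcal{A}=M-C$, the energy decomposition gives $\mathcal{A}=G^{-1}=(I-P)M$, hence $\det(I-P)=\det\mathcal{A}/\det M$ and $\det(I-P^Z)=\det\mathcal{A}^Z/\det M$, where $\mathcal{A}^Z=(I-P^Z)M$ has off-diagonal entries $-C_{x,y}Z_{x,y}$. The exponent $\sum_{x\neq y}\frac12 C_{x,y}(Z_{x,y}-1)\varphi_x\bar\varphi_y$ is a Hermitian quadratic form $\langle\varphi,H\varphi\rangle$ in the complex free field, so the complex Gaussian moment formula gives $E(e^{\langle\varphi,H\varphi\rangle})=\det(\Gamma^{-1})/\det(\Gamma^{-1}-H)$, with $\Gamma$ the covariance of $\varphi$. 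The key point is that, with the normalisation fixed by the isomorphism $\hat{\mathcal{L}}_1\stackrel{(d)}{=}\frac12\varphi^2$ (so that $\Gamma^{-1}=\frac12\mathcal{A}$), the $-1$ in $(Z_{x,y}-1)$ exactly cancels the off-diagonal entry of the precision operator, leaving $\Gamma^{-1}-H=\frac12\mathcal{A}^{\bar Z}$; since the expectation is real one has $\det\mathcal{A}^{\bar Z}=\det\mathcal{A}^Z$, the factors $2^{-|X|}$ cancel, and $E(e^{\langle\varphi,H\varphi\rangle})=\det\mathcal{A}/\det\mathcal{A}^Z=\det(I-P)/\det(I-P^Z)$, matching (i).

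The main obstacle in (i) is the justification of the loop-measure computation: one must verify that the holding times and the one-point loops drop out of the generating function and that the based-loop expansion with its $1/n$ multiplicity factor really produces $\sum_n\frac1n\mathrm{tr}((P^Z)^n)$, which is exactly where the precise definition of $\mu$ from \cite{stfl} enters. In (ii) the delicate points are purely bookkeeping: tracking the factor $\frac12$ in the exponent together with the covariance normalisation so that the cancellation producing $\mathcal{A}^Z$ is exact, and using the Hermitian symmetry of $Z$ (through the reality of the expectation) to replace $\bar Z$ by $Z$ in the final determinant.
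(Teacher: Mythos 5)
Your proposal is correct and follows essentially the same route as the paper: part (i) via the exponential (Campbell) formula for the Poisson ensemble, the reduction to the discrete loop measure, the expansion $\sum_{n\ge 1}\frac{1}{n}\bigl(\mathrm{Tr}((P^{Z})^{n})-\mathrm{Tr}(P^{n})\bigr)$ and the identity $\log\det=\mathrm{Tr}\log$; and part (ii) by evaluating the complex Gaussian expectation as the determinant ratio $\det(I-P)/\det(I-P^{Z})$, which is exactly the one-line assertion the paper makes. You correctly read the misprinted denominator $I-P$ as $\det(I-P)$, and your extra bookkeeping of the covariance normalisation in (ii) only fills in details the paper leaves implicit.
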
 
 \begin{proof}  (See also chapter 6 of \cite{stfl}) .\\
 i)  The left side can be expressed as 
$\exp(\alpha\int (\prod_{x,y} Z_{x,y}^{N_{x,y}(l)}-1)\mu(dl))$ and \\
\[ \int (\prod_{x,y} Z_{x,y}^{N_{x,y}(l)}-1)\mu(dl)=\int (\prod_{m=2}^{p(l)} Z_{\xi_{m-1},\xi_m} \mu(dl)-\mu(\{\text{non trivial loops}\})\] \\
which is equal to
$\sum_{n=1}^{\infty}\frac{1}{n}(\text{Tr}([P^Z]^n)
-\text{Tr}(P^n))$. The result follows from the identity $\log(\det)=\text{Tr}(\log)$. \\
ii) The right hand side equals $\frac{\det(I-P
)}{\det(I-P^Z)}$.
\end{proof}

\section{Eulerian networks}
We define a network to be a matrix $k$ with  $\mathbb{N}$-valued coefficient which vanishes on the diagonal and on entries $(x,y)$ such that $\{x,y\}$ is not an edge of the graph. We say that $k$ is Eulerian if $$ \sum_y k_{x,y}= \sum_y k_{y,x}$$ For any Eulerian network $k$, we define $k_x$ to be $\sum k_{x,y}=\sum k_{y,x}$.
The matrix $N^{(\alpha)}$ defines a random network which verifies the Eulerian property.\\

The distribution of the random network defined by $\mathcal{L}_{\alpha}$  is given in the following:
\begin{proposition}
\begin{enumerate}
\item[i)] For any Eulerian network $k$, $$ P(N^{(\alpha)}=k)=C\det(I-P)^{\alpha} \prod_{x,y} P_{x,y}^{k_{x,y}}$$ where $C$ is the coefficient of $\prod_{x,y} P_{x,y}^{k_{x,y}} $ in $\mathrm{Per}_{\alpha}(P(k_x, x\in X)) $. \\ Here $P(k_x, x\in X)$ is denoting the $(\vert k \vert , \vert k \vert)$ matrix obtained by repeating $k_x$ times each column of $P$ of index $x$  and then $k_y$ times each line of index $y$  (with  $\vert k \vert = \sum_{x} k_{x}$).\\
\item[ii)] For $\alpha=1$, there is a simpler expression:  For any Eulerian network $k$,$$  P(N=k)=\det(I-P)\frac{\prod_x {k_x}!}{\prod_{x,y} k_{x,y}!} \prod_{x,y} P_{x,y}^{k_{x,y}}.$$
\end{enumerate}
\end{proposition}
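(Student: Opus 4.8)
The plan is to read off the joint law of the edge-traversal numbers from their generating function. Since $\mathcal{G}$ is finite, $(N^{(\alpha)}_{x,y})_{x\neq y}$ is a finite family of $\mathbb{N}$-valued random variables, and Lemma \ref{toto}(i) gives
$$E\Big(\prod_{x\neq y} Z_{x,y}^{N^{(\alpha)}_{x,y}}\Big)=\det(I-P)^{\alpha}\,\det(I-P^{Z})^{-\alpha}.$$
Both sides are analytic in the entries $Z_{x,y}$ near $Z=0$ (there $\det(I-P^{Z})=1\neq0$), so by analytic continuation $P(N^{(\alpha)}=k)$ is exactly the coefficient of $\prod_{x,y}Z_{x,y}^{k_{x,y}}$ in the right-hand side. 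As $\det(I-P)^{\alpha}$ is independent of $Z$, everything reduces to expanding $\det(I-P^{Z})^{-\alpha}$ as a power series and extracting one monomial; only monomials with balanced in- and out-degree at each vertex survive, which recovers the a.s. Eulerian property of $N^{(\alpha)}$.

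The key step is an $\alpha$-permanent expansion of $\det(I-A)^{-\alpha}$. Writing $\log\det(I-A)^{-\alpha}=\alpha\sum_{m\geq1}\frac1m\mathrm{Tr}(A^{m})$ and applying the exponential formula for the cycle-type generating function (with weight $\alpha\,\mathrm{Tr}(A^{m})$ per $m$-cycle), I would obtain
$$\det(I-A)^{-\alpha}=\sum_{N\geq0}\frac1{N!}\sum_{\sigma\in S_{N}}\alpha^{c(\sigma)}\sum_{v:\{1,\dots,N\}\to X}\ \prod_{i=1}^{N}A_{v_{i},v_{\sigma(i)}},$$
where $c(\sigma)$ is the number of cycles of $\sigma$; the inner vertex-sum factorizes over the cycles of $\sigma$, each $\ell$-cycle yielding $\mathrm{Tr}(A^{\ell})$, which is precisely what makes the exponential formula apply. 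Taking $A=P^{Z}$ and collecting the monomial $\prod_{x,y}Z_{x,y}^{k_{x,y}}$ forces $N=|k|$ and selects the pairs $(\sigma,v)$ with exactly $k_{x,y}$ indices $i$ satisfying $v_{i}=x$, $v_{\sigma(i)}=y$. The labellings $v$ with block sizes $k_x$ form a single $S_{|k|}$-orbit of size $|k|!/\prod_x k_x!$ (stabiliser $\prod_x S_{k_x}$), so summing over the orbit and conjugating $\sigma$ turns the weighted count into the coefficient of $\prod_{x,y}P_{x,y}^{k_{x,y}}$ in the $\alpha$-permanent of the blow-up matrix $P(k_x,x\in X)$, up to the relabelling factor $\prod_x k_x!$. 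This yields i).

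For ii) I would specialise to $\alpha=1$, where $\mathrm{Per}_{1}$ is the ordinary permanent and the cycle weight drops out. Since $P(k_x,x\in X)$ is block-constant with every entry of the $(x,y)$ block equal to $P_{x,y}$, the coefficient of $\prod_{x,y}P_{x,y}^{k_{x,y}}$ in its permanent is the number of permutations of the blow-up set sending exactly $k_{x,y}$ elements of block $x$ into block $y$. Counting by first distributing, for each source block $x$, its $k_x$ elements among the targets ($\tfrac{k_x!}{\prod_y k_{x,y}!}$ choices) and then matching them bijectively inside each target block ($\prod_y k_y!$ choices) gives $\tfrac{(\prod_x k_x!)^2}{\prod_{x,y}k_{x,y}!}$; dividing by the factor $\prod_x k_x!$ from the previous step produces the stated constant $\tfrac{\prod_x k_x!}{\prod_{x,y}k_{x,y}!}$.

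I expect the main obstacle to be bookkeeping rather than conceptual: the delicate point is tracking the factor $\prod_x k_x!$ that relates the power-series coefficient of $\det(I-P^{Z})^{-\alpha}$ to the coefficient in the $\alpha$-permanent of the blow-up matrix, while carrying the cycle weights $\alpha^{c(\sigma)}$ intact through the identification with $S_{|k|}$. Checking that i) and ii) agree at $\alpha=1$ is a useful consistency test on these normalisations.
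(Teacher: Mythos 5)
Your argument is correct, and the consistency check you propose at $\alpha=1$ is exactly the right thing to do: carried out as you describe, the orbit count gives the coefficient of $\prod_{x,y}Z_{x,y}^{k_{x,y}}$ in $\det(I-P^Z)^{-\alpha}$ as $\frac{1}{\prod_x k_x!}$ times the $\alpha$-permanent coefficient, and at $\alpha=1$ the permutation count $\prod_x\frac{k_x!}{\prod_y k_{x,y}!}\cdot\prod_y k_y!$ divided by $\prod_x k_x!$ does yield $\frac{\prod_x k_x!}{\prod_{x,y}k_{x,y}!}$, matching ii); so your bookkeeping correctly pins down the normalization of the constant $C$ in i). For part i) you are on the same road as the paper, which simply cites the Vere-Jones/M\o ller--Rubak $\alpha$-permanent expansion of $\det(I-P^Z)^{-\alpha}$ where you derive it from the exponential formula for cycle types; that derivation is sound (the inner sum over labellings factorizes over cycles into traces, as you note). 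For part ii) you genuinely diverge: the paper states that ii) follows from i) but deliberately does not take that route, giving instead (a) a Gaussian-integral proof that starts from Lemma \ref{toto}(ii), passes to polar coordinates for the complex free field, and integrates out the phases $\theta_x$ --- which is where the Eulerian constraint appears --- before doing the radial integrals to produce the $\prod_x k_x!$; and (b) a proof via Wilson's algorithm, counting the $\frac{\prod_x k_x!}{\prod_{x,y}k_{x,y}!}$ ways an Eulerian network arises from ordered exit-edge choices at each vertex and summing over spanning trees. Your specialization of i) is the shortest and most self-contained path to ii), at the price of the permanent count; the paper's Gaussian proof buys the conceptual link to the free-field isomorphism, and its Wilson proof buys a direct sampling interpretation of the combinatorial factor. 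The only cosmetic caveat is your appeal to ``analytic continuation near $Z=0$'': the lemma is stated for Hermitian $Z$ with $0<|Z_{x,y}|\leq1$, so it is cleaner to extract coefficients by integrating against characters on the torus $|Z_{x,y}|=r$, $r\leq1$, where both sides are convergent power series in $Z_{x,y}$ and $\bar Z_{x,y}$; this changes nothing in substance.
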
 

\begin{proof}:
i) follows from the expansion of $\det(I-P^Z)^{\alpha}$ using a well known expansion formula for determinants powers using $\alpha$-permanents  (Cf \cite{VJ}, \cite{MR}).

ii) follows from i), but we will rather give two different alternative proofs.\\

\medskip 
Let $\mathfrak{N}$ be the additive semigroup of networks and $\mathfrak{E}$ be the additive semigroup of Eulerian networks. On one hand, note that
$$ E(\prod_{x,y} Z_{x,y}^{N_{x,y}})=\sum_{k\in \mathfrak{E} } P(N=k)\prod_{x,y} Z_{x,y}^{k_{x,y}}$$ 
On the other hand, from the previous lemma, we get\\

$ E(\prod_{x,y} Z_{x,y}^{N_{x,y}})=E(e^{\sum_{x,y}(\frac{1}{2} C_{x,y} (Z_{x,y}-1)\varphi_x \bar{\varphi}_y)} )\\
\\
= \frac{1}{(2\pi)^d \det(G)} \int e^{-\frac{1}{2}(\sum_{x}\lambda_x \varphi_x\bar{\varphi}_x -\sum_{(x,y)\in K\times K} C_{x,y} Z_{x,y}\varphi_x\bar{\varphi}_y)}\prod_x  \frac{1}{2i}d \varphi_x\wedge d\bar{\varphi}_x \\
\\
=\frac{1}{(2\pi)^d \det(G)} \int_0^\infty \int_0^{2\pi} e^{-\frac{1}{2}(\sum_{x}\lambda_x r_x^2 -\sum_{x,y} C_{x,y} Z_{x,y}r_x r_y e^{i(\theta_x-\theta_y)})}\prod_x r_x d r_xd{\theta}_x\\
\\
=\frac{1}{(2\pi)^d \det(G)} \int_0^\infty \int_0^{2\pi} e^{-\frac{1}{2}\sum_{x}\lambda_x r_x^2 }\sum_{n\in \mathfrak{N}}\prod_{x,y\in K} \frac{1}{n_{x,y}!}(C_{x,y}(\frac{1}{2}Z_{x,y}r_x r_y e^{i(\theta_x-\theta_y)})^{n_{x,y}} \prod_x r_x d r_xd{\theta}_x$\\

Integrating in the $\theta_x$ variables and using the definition of Eulerian networks, it equals
\\

$\frac{1}{\det(G)} \int_0^\infty  e^{-\frac{1}{2}\sum_{x}\lambda_x r_x^2 }\sum_{n\in \mathfrak{E}}\prod_{(x,y)\in K\times K} \frac{1}{n_{x,y}!}(\frac{1}{2}C_{x,y} Z_{x,y}r_x r_y )^{n_{x,y}} \prod_x r_x d r_x\\ \\
=\frac{1}{\det(G) \prod \lambda_x}\sum_{n\in \mathfrak{E}} \prod_{x \in K} n_x! \prod_{(x,y)\in K\times K}\frac{1}{n_{x,y}!}(\frac{C_{x,y}}{\lambda_x} Z_{x,y} )^{n_{x,y}}\\ \\
=\det(I-P) \sum_{n\in \mathfrak{E}} \prod_{x \in K} n_x! \prod_{(x,y)\in K\times K}\frac{1}{n_{x,y}!}(P_{x,y} Z_{x,y} )^{n_{x,y}}$.\\ 

We conclude the proof of the proposition by identifying the coefficients of $\prod_{x,y} Z_{x,y}^{k_{x,y}}$\\
Note that in the case of a space with two points $a$ and $b$, we see that the number of jumps $N_{a,b}$ follows a geometric distribution if $\alpha=1$. For general $\alpha$, we get a negative binomial distribution.\\

An alternative proof can be derived using Wilson's algorithm. It goes roughly as follows. We fix an order on the vertices: $X=\{x_1,\dots  ,x_n\}$  and denote by $\Lambda$ the set of $n$-uples of (possibly trivial) loops $\{l_1,\dots , l_n\}$ such that $l_i$ is rooted in $x_i$ and avoids $\{x_1,\dots  ,x_{i-1}\}$. Given an Eulerian network $k$, there are  $\frac{\prod_x {k_x}!}{\prod_{x,y} k_{x,y}!}$ different elements of $\Lambda$ inducing it. Indeed, $\frac{{k_x}!}{\prod_{x,y} k_{x,y}!}$ is the number of ways to order the oriented edges exiting from $x$. Once  such a choice has been made at very vertex, a $n$-uple of loops is determined: one starts with the first edge of the first vertex and then take the first edge of its endpoint, and so on until we come back to the first vertex and no exiting edge is left at this point. Then we move to the next vertex, etc... \\
Given any oriented spanning tree $T$ rooted in $\Delta$, all the elements of $\Lambda$ have the same probability $\prod_{x,y} P_{x,y}^{k_{x,y}}\prod_{(uv,)\in T}P_{u,v}$ to be (jointly with $T$) the output of Wilson's algorithm. We get the result by summing on all spanning trees.
\end{proof}\\
\section{Random homology}
Note that the additive semigroup of Eulerian networks is naturally mapped on the first homology group $H_1(\mathcal{G},\mathbb{Z})$ of the graph, which is defined as the quotient of the fundamental group by the subgroup of commutators. It is an Abelian group with $n=\vert E \vert -\vert X \vert +1$ generators. The homology class of the network $k$ is determined by the antisymmetric part $\widecheck{k}$ of the matrix $k$.\\The distribution of the induced random homology $\widecheck{N}^{(\alpha)}$  seems more difficult to compute explicitly. Note however that for any $j\in H_1(\mathcal{G},\mathbb{Z})$, $P(\widecheck{N}^{(\alpha)}=j)$ can be computed as a Fourier integral on the Jacobian torus of the graph $Jac(\mathcal{G})=H^1(\mathcal{G},\mathbb{R})/ H^1(\mathcal{G},\mathbb{Z})$. \\Here, following the approach of \cite{kosu}  we denote by $H^1(\mathcal{G},\mathbb{R})$ the space of harmonic one-forms , which in our context is the space of one-forms $\omega^{x,y}=-\omega^{y,x}$ such that $\sum_{y}C_{x,y}\omega^{x,y}=0$ for all $x\in X$ and by $H^1(\mathcal{G},\mathbb{Z})$ the space of harmonic one-forms $\omega$ such that for all discrete loop (or equivalently for all non backtracking discrete loop) $\gamma$ the holonomy $\omega(\gamma)$ is an integer.\\
More precisely, if we equip $H^1(\mathcal{G},\mathbb{R})$ with the scalar product defined by the set of conductances $C$: $$\Vert \omega\Vert^2=\sum_{x,y}C_{x,y}(\omega^{x,y})^2$$ and let $d\omega$ be the associated Lebesgue measure, for all $j \in H_1(\mathcal{G},\mathbb{Z})$, $$P(\widecheck{N}^{(\alpha)}=j)=\frac{1}{\vert \mathrm{Jac}(\mathcal{G}\vert}\int_{Jac(\mathcal{G})}E(e^{2\pi i\langle \widecheck{N}^{(\alpha)}-j,\omega \rangle} ) d\omega $$\\
$$=\frac{1}{\vert \mathrm{Jac}(\mathcal{G})\vert}\int_{\mathrm{Jac}(\mathcal{G})}\left[\frac{\det(I-P^{e^{2\pi i\omega}})}{\det(I-P)}\right]^{-\alpha}e^{-2\pi i\langle j,\omega \rangle}d\omega.$$
For $\alpha=1$, this expression can be written equivalently as $$=\frac{1}{\vert \mathrm{Jac}(\mathcal{G})\vert}\int_{\mathrm{Jac}(\mathcal{G})} E(e^{\sum_{x\neq y}(\frac{1}{2} C_{x,y} (e^{2\pi i \omega_{x,y}}-1)\varphi_x \bar{\varphi}_y)} ) e^{-2\pi i\langle j,\omega \rangle}d\omega$$
$$=\frac{1}{\vert \mathrm{Jac}(\mathcal{G})\vert}\int_{\mathrm{Jac}(\mathcal{G})} E(e^{\frac{1}{2} (\mathcal{E}-\mathcal{E}^{(2\pi i \omega)})(\varphi,\bar{\varphi}) )}e^{-2 \pi  i \langle j ,\omega \rangle} d\omega$$ where $\mathcal{E}^{(2\pi i \omega)}$ denotes the positive energy form defined by :$$\mathcal{E}^{(2\pi i \omega)}(f,g)=\frac{1}{2}\sum_{x,y}C_{x,y}(f(x)-e^{2\pi i \omega_{x,y}}f(y))(\bar{g}(x)-e^{-2\pi i \omega_{x,y}}\bar{g}(y))+\sum_x \kappa_x f^{2}(x)$$ This expession can also be written as
$$\frac{1}{\vert\mathrm{Jac}(\mathcal{G})\vert } \frac{1}{\det(G)}\int_{\mathrm{Jac}(\mathcal{G})} E(e^{-\frac{1}{2} \mathcal{E}^{(2\pi i \omega)}(\varphi,\bar{\varphi}) }e^{-2 \pi  i \langle j ,\omega \rangle} d\omega \dfrac{d\varphi \wedge d\bar{\varphi}}{2i}$$\\
This calculation suggests a coupling between the loop ensembles and the gauge field measure defined on $H^1(\mathcal{G},\mathbb{R})$ .
\\ Note finally that by adapting the results proved in \cite{kosu}, we see that the volume of the Jacobian torus $\vert \mathrm{Jac}(\mathcal{G})\vert$ can be expressed as the inverse of the square root of a number which can be computed in two ways:

a) as  $ \det (\Lambda) \prod _{e\in E} C_e$ with $\Lambda$ denoting the $(n,n)$ intersection matrix defined by any base $c_i$ of the  module $H_1(\mathcal{G},\mathbb{Z})$: $\Lambda_{i,j}=\langle c_i , c_j\rangle$. It is well known that such a base can be defined by any spanning tree: One considers a spanning tree $T$\ of the graph, and choose an
orientation on each of the $n$ remaining edges. This defines $n$ oriented
cycles on the graph and a system of $n$ generators for the fundamental group and for the homology group. (See \cite{Mass} or (\cite{Ser}) in a more general context).
 In this definition of $\Lambda$ we use the dual scalar product of the scalar product defined on the space of harmonic forms. It is induced by the scalar product on functions on the set of oriented edges $E^{o}$ by $\langle e, \pm e\rangle=\pm \frac{1}{C_e}$ and  $\langle c_i , c_j\rangle=0$ if $e \neq \pm e'$.\\
 
b) as  sum of the of the $C$-weights of the spanning trees of $\mathcal{G}$ (the weights are defined as the product of the conductances of their edges). See \cite{stfl} section 8-2 for a determinantal expression.

\section{Additional remarks}
\subsection{A determinant formula}
Recall that $N_x$ denotes $\sum_y N_{x,y}$. Lemma \ref{toto} can be stated in a more general form (cf \cite{stfl} (6-4)).
$$E(\prod_{x\neq y} Z_{x,y}^{N_{x,y}}\prod_{x} Z_{x,x}^{-(N_{x}+1)}) =E(e^{\sum_{x\neq y}(\frac{1}{2} C_{x,y} (Z_{x,y}-1)\varphi_x \bar{\varphi}_y)+\sum_{x}(\frac{1}{2} \lambda_x (1-Z_{x,x})\varphi_x \bar{\varphi}_x)} ).$$
A consequence is that is that for any set $(x_i,y_i)$ of distinct oriented edges, and any set $z_l$  of distinct vertices,
$$E(\prod_i N_{(x_i,y_i)}\prod_l (N_{z_l}+1))=E(\prod_i {\varphi_{x_i }\bar{\varphi}_{y_i}}C_{(x_i,y_i)} \prod_l {\lambda_{z_l } \varphi_{z_l }\bar{\varphi}_{z_l}} ).$$
In particular, if $X$ is assumed to be finite, if $[D_{N}]_{(x,y)}=0$ for $x\neq y$ and $[D_{N}]_{(x,x)}=1+N_{x}$, for all $\chi\geq \lambda$, then
$$E(\det( M_{\chi}D_{N}-N))=\det(M_{\varphi}(M_\chi-C)M_{\bar{\varphi}}) =\det(M_\chi-C) \text{Per}(G).$$\\
Note that $D_{N}-N$ is a Markov generator.

\subsection{An application of the BEST theorem.}

The BEST theorem (Cf \cite{Stan}) determines the measure induced on Eulerian networks by the restriction of $\mu$ to non trivial loops. If $k$ is a Eulerian network, let $\tilde{k}$ be the oriented Eulerian graph associated with it. Its set of vertices is $X$ and it has $k_{x,y}$ oriented edges from $x$ to $y$.  Let$\vert k \vert = \sum_{x} k_{x}$ be the total number of edges in $\tilde{k}$.
Note that all pointed loops inducing the same network $k \in \mathfrak{E}$ have the same measure $\frac{1}{\vert k \vert}\prod_{x,y} P_{x,y}^{k_{x,y}} $ and that there are $\prod_{x,y}k_{x,y}!$ rooted Eulerian tours of $\tilde{k}$, i.e. directed rooted closed paths visiting each edge of $\tilde{k}$ exactly once, inducing each of them. It follows that the $\mu$-measure of $k$ is given by $\frac{N(k)}{ \prod_{x,y}k_{x,y}!} \frac{1}{\vert k \vert}\prod_{x,y} P_{x,y}^{k_{x,y}} $ where $N(k)$ is the number of Eulerian tours of $\tilde{k}$. It is given by the BEST theorem:$$N(k)=\vert k \vert \,\tau(\tilde{k})\prod_x (k_x-1)!$$ where $\tau(\tilde{k})$ is the number of arborescences of $\tilde{k}$ (which can be obtained by the matrix-tree theorem (Cf $\cite{Stan}$ )) and the factor $\vert k \vert$ takes into account the choice of the first oriented edge in the Eulerian tour. Hence, 
$$\mu(k)=\tau(\tilde{k})\prod _x(k_x-1)!\prod_{x,y} \frac{P_{x,y}^{k_{x,y}}}{k_{x,y}!}$$ for $k$ non zero. We know already that  the total $\mu$ measure of non zero networks is $- \log(\det(I-P))$.\\Our probability on Eulerian networks is therefore a sum of the convolutions powers of this measure with Poissonnian weights. I do not know of any combinatorial proof of that fact.

\subsection{Open questions}
Given any pair $(A,B)$ of disjoint subsets of $X$, a network $k$ defines an integer $N_{A,B}(k)$ which is the value of the maximal flow from $A$ to $B$. It is also the minimal number of cuts (on the associated digraph $\tilde{k}$ ) which separate $A$ and $B$. See for exemple \cite{Behz} . Percolation from $A$ to $B$ means that $N_{A,B}(k)$ is non-zero. For the random networks, these random variables are of special interest as are the resolvent, the Poisson kernels and the spectrum of the random Markov generator $D_{N}-N$. Their asymptotic properties should be investigated in connection with the percolation problem for loop clusters (Cf: \cite{ljlm},\cite{ch},\cite{Lup},\cite{chsa},\cite{Lup2}).\\
The coupling with gauge fields was already mentionned in the homology section.

\bigskip

\noindent

  Universit\'e Paris-Sud, B\^atiment 425, Orsay, France.

\bigskip
   yves.lejan@math.u-psud.fr

\end{document}